\newtheorem{theorem}{Theorem}[section]
\newtheorem{proposition}[theorem]{Proposition}
\newtheorem{definition}[theorem]{Definition}
\newtheorem{corollary}[theorem]{Corollary}
\newtheorem{problem}{Problem}
\newtheorem{remark}{Remark}
\newtheorem{conj}[theorem]{Conjecture}
\numberwithin{equation}{section}
\def\qed{\hfill \mbox{$\square$}}
\begin{document}

\title{Rigidity of powers and Kosniowski's conjecture}
\author {Zhi L\"u and Oleg R. Musin}

 \subjclass[2010]{}
\thanks{The first author is partially supported by the  NSFC  grants (No. 11661131004, 11371093 and 11431009). The second author is partially supported by the NSF grant DMS-1400876 and the RFBR grant 15-01-99563.}
\keywords{Rigidity of powers, circle action, fixed points, rigid multiplicative genus}

\address{School of Mathematical Sciences, Fudan University, Shanghai, 200433, P.R. China. }
 \email{zlu@fudan.edu.cn}
\address{University of Texas Rio Grande Valley, School of Mathematical and
 Statistical Sciences, One West University Boulevard, Brownsville, TX, 78520, USA.}
 \email{oleg.musin@utrgv.edu}

\begin{abstract}
In this paper we state some problems on rigidity  of powers in terms of complex analysis and number-theoretic abstraction, which has a strong topological background for the rigid Hirzebruch genera and Kosniowski's conjecture of unitary circle actions. However, our statements of these problems are elementary enough and do not require any knowledge of algebraic topology.  We shall give the solutions of these problems for some particular cases.
As a consequence, we obtain that Kosniowski's conjecture holds in the case of dimension $\leq 10$ or equal to 14.
\end{abstract}
\maketitle


\section{The power--rigidity  problem }

In this section we first give some simple notions  and propose some problems in terms of complex analysis and number-theoretic abstraction,  whose original ideas come from the topological background for the rigid Hirzebruch genera and Kosniowski's conjecture of unitary circle actions, as we will see in Section 3. This can lead us to work on the framework of complex analysis and number-theoretic abstraction, so that we can temporarily forget any  topological background. We hope that we can use this way to characterise all unitary closed ${\bf S^1}$-manifolds fixing only isolated points.

\subsection{$T_{x,y}$--rigidity.}
Let $W=\left(w_{ij}\right)$ be an $m\times n$ matrix, where all $w_{ij}$ are nonzero integers. Let $x$ and $y$ be any complex numbers. Define a complex function in variable $z$ as follows:

$$
T_{x,y}^W(z):=\sum\limits_{i=1}^m\prod_{j} {\frac{xz^{w_{ij}}+y}{z^{w_{ij}}-1}}.
$$
Then $T_{x,y}^W(z)$ is a rational function in $z$. We call numbers $w_{ij}$ {\em powers} or {\em weights}.

\begin{definition} \label{rigid}
We say that a set $W$ of powers  is {\bf rigid} (or $T_{x,y}$--{\bf rigid}) if  the function $T_{x,y}^W(z)$ is a constant.
\end{definition}

It is easy to prove the following statement.
\begin{proposition} \label{prop1} Let $W$ is rigid . Then
$$
T_{x,y}^W(z)\equiv \sum\limits_{i=1}^m x^{s_i^+}(-y)^{s_i^-}
$$
where $s_i^+, s_i^-$ are numbers of positive and negative powers $w_{ij}$ in the row  $w_{i1},\ldots, w_{in}$.
\end{proposition}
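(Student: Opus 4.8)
The plan is to pin down the constant value of $T_{x,y}^W(z)$ by computing its limit as $z \to \infty$. Since $W$ is rigid (Definition \ref{rigid}), the rational function $T_{x,y}^W$ equals some constant $c$ identically, and hence $c = \lim_{z\to\infty} T_{x,y}^W(z)$; the whole point is that this limit can be evaluated explicitly factor by factor, which immediately produces the asserted formula.

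First I would analyze a single factor $\frac{x z^{w} + y}{z^{w} - 1}$ as $z \to \infty$ along the positive real axis, distinguishing the sign of the (nonzero) integer power $w$. If $w > 0$ then $z^w \to \infty$, and dividing numerator and denominator by $z^w$ shows the factor tends to $x$. If $w < 0$, write $w = -k$ with $k > 0$; then $z^w = z^{-k} \to 0$, so the factor tends to $\frac{y}{-1} = -y$. The hypothesis that every $w_{ij}$ is nonzero is exactly what guarantees that each factor has one of these two definite limits.

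Next I would assemble these limits into the product over a fixed row $i$. Since the product $\prod_j \frac{x z^{w_{ij}} + y}{z^{w_{ij}} - 1}$ has finitely many factors, each with a finite limit, its limit is the product of the limits, namely $x^{s_i^+}(-y)^{s_i^-}$, where $s_i^+$ and $s_i^-$ count the positive and negative entries of the $i$-th row. Summing over the finitely many rows then gives $\lim_{z\to\infty} T_{x,y}^W(z) = \sum_{i=1}^m x^{s_i^+}(-y)^{s_i^-}$, and equating this with the constant $c$ completes the argument.

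The computation is essentially routine, so I expect no serious obstacle; the only points deserving a word of care are the justification that the limit commutes with the finite sum and product (immediate, since each factor converges to a finite value), and the observation that one may let $z \to \infty$ along a ray avoiding the finitely many poles of the individual summands — though this is automatic here, as the full sum is by hypothesis a constant function with no poles at all. As a consistency check I would note that taking the limit $z \to 0$ instead produces $\sum_{i} x^{s_i^-}(-y)^{s_i^+}$, so rigidity in fact forces the two expressions to coincide; this is reassuring but is not needed for the statement as posed.
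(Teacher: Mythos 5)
Your proof is correct: evaluating the constant by letting $z\to\infty$ along the positive real axis, where each factor $\frac{xz^{w}+y}{z^{w}-1}$ tends to $x$ for $w>0$ and to $-y$ for $w<0$, is exactly the standard argument (the paper states the proposition without proof, calling it easy, and this limit computation is the same device used classically to derive the Atiyah--Hirzebruch formula (\ref{e3}) that the proposition abstracts). Your side remarks are also sound: for real $z>1$ no summand has a pole, since the poles lie only at roots of unity, so the limit argument goes through without any detour.
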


\begin{definition} Let $a_1, \ldots, a_{n+1}$ be distinct integer numbers. Let
$$(w_{i1},\dots,w_{in}):=(a_i-a_1,\ldots, a_i-a_{i-1},  a_i-a_{i+1},\ldots, a_i-a_{n+1}).$$
Then $W$ is an $(n+1)\times n$ matrix.   We say that this set of powers is {\bf quasilinear}.
\end{definition}

A proof of the following proposition can be found in  \cite{mus11}.
\begin{proposition} \label{qsl} Let $W$ is  quasilinear. Then  $W$ is rigid and
$$
T_{x,y}^W(z)=\frac{x^{n+1}-(-y)^{n+1}}{x+y}.
$$
\end{proposition}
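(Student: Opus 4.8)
The plan is to eliminate the $z$-dependence by the substitution $t_i := z^{a_i}$, which turns each factor $\frac{xz^{a_i-a_j}+y}{z^{a_i-a_j}-1}$ into $\frac{xt_i+yt_j}{t_i-t_j}$, so that
$$
T^W_{x,y}(z)=\sum_{i=1}^{n+1}\prod_{j\neq i}\frac{xt_i+yt_j}{t_i-t_j}=:\sum_{i=1}^{n+1}c_i .
$$
Since $a_1,\dots,a_{n+1}$ are distinct, the $t_i$ are distinct nonzero Laurent monomials in $z$, so it suffices to evaluate this symmetric rational expression in the $t_i$; if the value turns out to be free of the $t_i$, rigidity follows at once.

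The key algebraic observation is that for each $i$ the missing diagonal factor is precisely $x+y$ times $t_i$. Setting $f(s):=\prod_{j=1}^{n+1}(xs+yt_j)$, a polynomial of degree $n+1$ in $s$ with leading coefficient $x^{n+1}$, one has $f(t_i)=(xt_i+yt_i)\prod_{j\neq i}(xt_i+yt_j)=(x+y)\,t_i\prod_{j\neq i}(xt_i+yt_j)$. Hence, with $\phi(s):=\dfrac{f(s)}{s\,\prod_{k=1}^{n+1}(s-t_k)}$, I get
$$
(x+y)\,c_i=\frac{f(t_i)}{t_i\,\prod_{j\neq i}(t_i-t_j)}=\operatorname{Res}_{s=t_i}\phi(s),
$$
so that $(x+y)\sum_i c_i$ is the sum of the residues of $\phi$ at the poles $s=t_1,\dots,t_{n+1}$. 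I would then account for the two remaining contributions demanded by the residue theorem: $\phi$ has one further finite pole at $s=0$, with $\operatorname{Res}_{s=0}\phi=\frac{f(0)}{\prod_k(-t_k)}=\frac{y^{n+1}\prod_k t_k}{(-1)^{n+1}\prod_k t_k}=(-y)^{n+1}$, and it decays like $x^{n+1}/s$ at infinity because $\deg f=n+1$ while the denominator has degree $n+2$. Since the sum of all finite residues of a rational function equals the coefficient of $1/s$ in its expansion at $\infty$, this yields
$$
(x+y)\sum_{i=1}^{n+1}c_i+(-y)^{n+1}=x^{n+1},
$$
whence $\sum_i c_i=\dfrac{x^{n+1}-(-y)^{n+1}}{x+y}$, manifestly independent of the $t_i$ and hence of $z$. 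This proves simultaneously that $W$ is rigid and that $T^W_{x,y}$ takes the asserted value.

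The hard part is not any estimate but the bookkeeping at the two ``extra'' singular points. Because $\phi$ decays only like $1/s$ (not $1/s^2$), the residue at infinity is nonzero and supplies exactly the $x^{n+1}$ term, so it must not be discarded; and the spurious pole at $s=0$, introduced when dividing by $s$ to expose the factor $x+y$, contributes the $(-y)^{n+1}$ term, where keeping the sign from $\prod_k(-t_k)=(-1)^{n+1}\prod_k t_k$ straight is where a slip is most likely. An alternative that avoids residues at infinity, if one prefers to stay purely algebraic, is to write $f(s)/s=q(s)+f(0)/s$ with $\deg q=n$ and apply the Lagrange leading-coefficient identity $\sum_i \frac{q(t_i)}{\prod_{j\neq i}(t_i-t_j)}=x^{n+1}$ together with the elementary evaluation $\sum_i\bigl(t_i\prod_{j\neq i}(t_i-t_j)\bigr)^{-1}=(-1)^n/\prod_k t_k$; these reproduce the same two terms. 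As a sanity check, the cases $n+1=1$ and $n+1=2$ give $1$ and $x-y$, matching $\frac{x^{n+1}-(-y)^{n+1}}{x+y}$.
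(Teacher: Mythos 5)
Your proof is correct, and it is genuinely different from what the paper does: the paper offers no proof of Proposition~\ref{qsl} at all, only the citation to \cite{mus11}, where the statement is obtained in its topological context (quasilinear weight sets are precisely the fixed-point data of a linear circle action on $\mathbb{C}P^n$ with distinct weights $a_1,\dots,a_{n+1}$, so the formula is just the $T_{x,y}$-genus of $\mathbb{C}P^n$ combined with the rigidity theorem of Krichever). Your residue argument replaces that appeal to rigidity by a self-contained algebraic identity: I checked the substitution $t_i=z^{a_i}$, the factorization $f(t_i)=(x+y)t_i\prod_{j\neq i}(xt_i+yt_j)$, the residues at $s=t_i$, at $s=0$ (the sign giving $(-y)^{n+1}$ is right), and the $1/s$ coefficient $x^{n+1}$ at infinity, and the bookkeeping is all correct; your identity in fact proves the stronger statement that $\sum_i\prod_{j\neq i}\frac{xt_i+yt_j}{t_i-t_j}$ is constant as a rational function of independent variables $t_1,\dots,t_{n+1}$. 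This is well suited to the paper's stated aim of keeping Section~1 free of topology. Two hairline points you should make explicit, though neither is a gap: for a specific complex number $z$ the values $t_i=z^{a_i}$ can collide (when $z^{a_i-a_j}=1$), so one should note that the bad $z$ form a finite set and that two rational functions of $z$ agreeing off a finite set agree identically, which yields rigidity; and the division by $x+y$ requires $x+y\neq 0$, the case $x=-y$ being recovered because both sides are polynomials in $x,y$ (indeed $\frac{x^{n+1}-(-y)^{n+1}}{x+y}=\sum_{k=0}^{n}x^k(-y)^{n-k}$).
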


 Topologically, the above case actually corresponds to the case of almost complex closed ${\bf S^1}$-manifolds fixing isolated points. Generally, it  can be extended to the case for sets of powers with signs, which has a more direct connection with $T_{x,y}$--genera of unitary circle actions fixing isolated points.  Let $W=\left(w_{ij}\right)$ be an $m\times(n+1)$ matrix, where all $w_{ij}$ are nonzero integers and $w_{i,n+1}=\pm1$. Write  $\varepsilon_i=w_{i,n+1}$. Define
$$
T_{x,y}^{W}(z):=\sum\limits_{i=1}^m\varepsilon_i\prod_{j} {\frac{xz^{w_{ij}}+y}{z^{w_{ij}}-1}}
$$
 where $W$ is called {\em a set of powers with signs}.
 \vskip .2cm
In a similar way to Definition~\ref{rigid}, we say that a set $W$ of powers with signs is {\em rigid} if $T_{x,y}^{W}(z)$ does not depend on $z$. In this case we also have
\begin{equation}\label{rig-e1}
T_{x,y}^{W}(z)=\sum\limits_{i=1}^m\varepsilon_i\prod_{j} {\frac{xz^{w_{ij}}+y}{z^{w_{ij}}-1}}\equiv
\sum\limits_{i=1}^m {\varepsilon_ix^{s_i^+}(-y)^{s_i^-}}.
\end{equation}
As we shall see,  this formula  is exactly  (\ref{e5}), induced by the rigidity of $T_{x,y}$--genus.


\begin{problem}[The power--rigidity problem] \label{prob}
Let $W$ be a rigid set of powers with signs or without signs (i.e., an
$m\times (n+1)$-matrix or an $m\times n$-matrix).
 \begin{enumerate}
 \item[$(i)$] What can be the set $W$ of powers?

\item[$(ii)$]  Suppose there are $x$ and $y$ such that $T_{x,y}^W(z)\ne0$.
 Is there some linear function $\ell(n)$ such that $m>\ell(n)$?  In particular,  is it true that $$m\ge [{n/2}]+1?$$
  \end{enumerate}
\end{problem}

 Problem~\ref{prob} (ii) is essentially related to  the Kosniowski's conjecture:
  for a unitary circle action on a unitary closed manifold $M$ of dimension $2n$ fixing just $m$ isolated points,
if $M$ is not a boundary, then  $m\ge [{n/2}]+1$.


\subsection{$L$--rigidity.}  Consider the case $x=y=1$. Let
$$
L_W(z):=T_{x,y}^W(z).
$$
Then
$$
L_W(z)=\sum\limits_{i=1}^m\varepsilon_i\prod_{j=1}^n {\frac{z^{w_{ij}}+1}{z^{w_{ij}}-1}}
$$

Note that, if we change a sign of any power  $w_{ij}$, then this is equivalent to  changing the sign of  $\varepsilon_i$. It is clear that without loss of generality we may assume that all powers $w_{ij}$, $j\le n,$ are {positive.} So here we may assume that $w=(w_{ij})$ is an $m\times n$ matrix with all $w_{ij}>0$. Let $s:=\{\varepsilon_1,\ldots,\varepsilon_m\}$ be  the set of signs. By $W=\{w,s\}$ we denote the set of weights and signs.

\begin{definition} We say that $W$ is {\bf $L$--rigid} if $L_W(z)$ does not depend on $z$.
\end{definition}

By definition we have that $T_{x,y}$--rigidity implies $L$--rigidity. However, the $L$--rigidity property is still  stronger  enough.

\begin{proposition}\label{prop3}
Let $W$ be $L$--rigid. Then  $L_W(z)=0$ and $m$ is even if $n$ is odd, and  $L_W(z)\equiv m \pmod{2}$ if $n$ is even.
\end{proposition}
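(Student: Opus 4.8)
The plan is to pin down the constant value of $L_W(z)$ in two independent ways and then compare them. First I would observe that, since $W$ is $L$--rigid, the function $L_W(z)$ equals its limit as $z\to\infty$; because each factor $\frac{z^{w_{ij}}+1}{z^{w_{ij}}-1}\to 1$ when $w_{ij}>0$, this limit is simply $\sum_{i=1}^m\varepsilon_i$. (Equivalently, this is formula (\ref{rig-e1}) evaluated at $x=y=1$, using $s_i^+=n$ and $s_i^-=0$ since all weights are positive.) Thus the constant value of $L_W$ is $\sum_{i=1}^m\varepsilon_i$, and the remaining task is to extract the claimed constraints on this sum.

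The key step is to exploit the symmetry of the building block under $z\mapsto 1/z$. A direct computation gives
$$\frac{z^{-w}+1}{z^{-w}-1}=\frac{1+z^{w}}{1-z^{w}}=-\,\frac{z^{w}+1}{z^{w}-1},$$
so each factor picks up a sign, and the product over the $n$ positive weights in row $i$ is multiplied by $(-1)^n$. Summing over $i$ then yields the functional equation $L_W(1/z)=(-1)^n L_W(z)$. Since $L_W$ is constant, the left-hand side equals the constant itself, so the constant $c$ satisfies $c=(-1)^n c$.

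When $n$ is odd this forces $c=-c$, i.e. $L_W(z)=\sum_{i=1}^m\varepsilon_i=0$; as each $\varepsilon_i=\pm1$, a vanishing sum requires equally many $+1$'s and $-1$'s, so $m$ is even. When $n$ is even the functional equation gives no constraint, and I would fall back on the elementary parity observation that if $p$ of the $\varepsilon_i$ equal $+1$ and $q$ equal $-1$, then $\sum\varepsilon_i=p-q\equiv p+q=m\pmod 2$, giving $L_W(z)\equiv m\pmod 2$. I do not expect a genuine obstacle here: the only real idea is spotting the $z\mapsto 1/z$ antisymmetry of the factors, after which everything reduces to a one-line limit computation and a parity count.
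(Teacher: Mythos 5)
Your proof is correct and follows essentially the same route as the paper: both rest on the antisymmetry $L_W(z^{-1})=(-1)^nL_W(z)$ to kill the constant when $n$ is odd, and on the identity $L_W(z)=\sum_i\varepsilon_i$ (which you rederive via the $z\to\infty$ limit, while the paper cites formula (\ref{rig-e1})) together with the parity count $\sum_i\varepsilon_i\equiv m\pmod 2$. The only cosmetic difference is that you make the limit computation explicit rather than quoting (\ref{rig-e1}), which is a nice self-contained touch but not a different argument.
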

\begin{proof} Note that $$L_W(z^{-1})=(-1)^nL_W(z).$$  Since  $L_W(z^{-1})=L_W(z)$ for all $L$--rigid $W$,   for odd $n$ we have $L_W(z)=0$.

\vskip .1cm

If $W$ with positive powers is L--rigid, then  (\ref{rig-e1}) yields that
\begin{equation*}\label{rig-e2}
L_W(z)=\varepsilon_1+\ldots+\varepsilon_m.
\end{equation*}
This implies the equality $L_W(z)\equiv m \pmod{2}$. In particular, since we have proved that if $n$ is odd, $L_W(z)=0$,  so
$m$ is even.
\end{proof}

\begin{corollary}
Let $W$ be $L$--rigid. If $L_W(z)\ne0$, then $n$ is even.
\end{corollary}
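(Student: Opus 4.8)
The plan is to read this corollary off from Proposition~\ref{prop3} as a contrapositive, so essentially no new work is required. Proposition~\ref{prop3} asserts that an $L$-rigid $W$ with $n$ odd forces $L_W(z)=0$. Hence, assuming $W$ is $L$-rigid and $L_W(z)\neq 0$, the integer $n$ cannot be odd; since $n$ is a positive integer, it must be even. This already settles the statement.

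For a self-contained argument that does not merely quote the proposition, I would reproduce its single key ingredient, namely the functional equation $L_W(z^{-1})=(-1)^n L_W(z)$. This follows by substituting $z\mapsto z^{-1}$ in each factor and simplifying $\frac{z^{-w_{ij}}+1}{z^{-w_{ij}}-1}=-\frac{z^{w_{ij}}+1}{z^{w_{ij}}-1}$, which contributes exactly one sign per column while leaving each $\varepsilon_i$ untouched; with $n$ columns this produces the overall factor $(-1)^n$. Since $L$-rigidity makes $L_W$ constant in $z$, one has $L_W(z^{-1})=L_W(z)$. Combining the two identities when $n$ is odd yields $L_W(z)=-L_W(z)$, whence $L_W(z)=0$. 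Contraposing, $L_W(z)\neq 0$ forces $n$ to be even.

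I do not expect any genuine obstacle here: the claim is a formal consequence of the parity-of-$n$ dichotomy already established in Proposition~\ref{prop3}, and the only computation it relies on—the sign behaviour of each factor under $z\mapsto z^{-1}$—has already been performed in the proof of that proposition.
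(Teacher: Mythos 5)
Your proposal is correct and matches the paper exactly: the paper states this corollary without proof as an immediate contrapositive of Proposition~\ref{prop3}, and your self-contained backup argument (the functional equation $L_W(z^{-1})=(-1)^n L_W(z)$ combined with constancy of $L_W$) is precisely the computation the paper uses to prove that proposition.
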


Note that if $W$ is quasilinear and $n$ is even, then it  follows from Proposition \ref{qsl}  that  $L_W(z)=1$ for even $n$. In this case $m=n+1$. Actually, we know only this example of $L$--rigid $W$ with $L_W(z)>0$ and $m\le n+1$.

\begin{problem}\label{prob1} Let $W$ be $L$--rigid with $m\le n+1$. Is it true that if  $L_W(z)\ne0$, then  $m=n+1$  and  $|L_W(z)|=1$? In particular, is $W$  quasilinear?
\end{problem}

\subsection{Pairs of powers.}
In \cite{mus80}, using $L$--rigidity and some simple topology, we proved that the set of weights $w=(w_{ij})$ of a circle action on a manifold can be divided into pairs $(w_{ij},w_{k\ell})$ such that $w_{ij}=w_{k\ell}$ with $i\ne k$ for some particular cases (see \cite[Theorem 1.1]{mus80}). Indeed, using only $L$--rigidity this can be carried out only for some particular cases.
\begin{problem} Let $W$ be an $L$--rigid. Is it true that the set of powers $w=(w_{ij})$  can be divided into pairs $(w_{ij},w_{k\ell})$ such that $w_{ij}=w_{k\ell}$ with $i\ne k$?
\end{problem}

\section{The power--rigidity  problem for some cases}

In this section we characterise the rigid sets of powers with signs in some special cases.

\subsection{The case $n=1$.}

\begin{theorem}\label{prop4} Let $W$ be $L$--rigid  with $n=1$.
Then $m=2k$ is even,  and  the sets of powers and signs are $$w=\{a_1,a_1,\ldots,a_k,a_k\}, \qquad s=\{1,-1,\ldots,1,-1\}. $$
\end{theorem}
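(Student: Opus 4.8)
The plan is to read off the structure of $W$ from the poles of the rational function $L_W$. Since $n=1$, the function in question is
$$L_W(z)=\sum_{i=1}^m \varepsilon_i\,\frac{z^{w_i}+1}{z^{w_i}-1},\qquad w_i>0,$$
and by Proposition~\ref{prop3} the hypothesis of $L$--rigidity already tells us that $L_W\equiv 0$ and that $m$ is even. The key observation is that a constant $L_W$ can have no poles, so the residues contributed by the individual summands at every root of unity must cancel.

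First I would compute these residues. Each summand has only simple poles, located at the roots of $z^{w_i}=1$. Fix $d\ge 1$ and let $\zeta$ be a primitive $d$--th root of unity; then the $i$--th summand has a pole at $\zeta$ exactly when $d\mid w_i$, and a direct computation using $\zeta^{w_i}=1$ gives residue $\varepsilon_i\cdot 2\zeta/w_i$ there. Demanding that $L_W$ be regular at $\zeta$ therefore yields, for every $d$,
$$\sum_{i:\,d\mid w_i}\frac{\varepsilon_i}{w_i}=0.$$

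Next I would reorganise this family of identities by grouping equal weights. Writing $c_v:=\sum_{i:\,w_i=v}\varepsilon_i$ for each weight value $v$ (so $c_v$ is the number of $+1$'s minus the number of $-1$'s among the rows of weight $v$), the identity above becomes $\sum_{v:\,d\mid v} c_v/v=0$ for every $d$. This is a triangular system which I would solve by downward induction on $v$: taking $d=v$ isolates the term $c_v/v$ together with the contributions of the proper multiples of $v$, all of which are strictly larger and hence already known to vanish; thus $c_v=0$ for every weight value $v$. Equivalently, each weight value occurs with equally many signs $+1$ and $-1$, which is exactly the asserted decomposition into pairs $(a_j,+1),(a_j,-1)$ and forces $m=2k$.

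The routine parts are the residue computation and the bookkeeping of the induction; the point that really carries the argument is the passage from ``$L_W$ is constant'' to the pole--cancellation identities, together with the observation that ordering the identities by the largest weight makes the linear system for the $c_v$ triangular (a finite M\"obius--type inversion). I expect the main obstacle, if any, to be purely presentational: making sure the indexing by primitive $d$--th roots correctly separates the contributions, so that only weights divisible by $d$ enter each equation.
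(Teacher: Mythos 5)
Your proof is correct and follows essentially the same route as the paper: both arguments rest on the observation that a constant rational function has no poles, so the residues at the roots of unity of order equal to the largest weight must cancel, forcing equal weights to come in opposite-sign pairs. The paper phrases this as a remove-a-cancelling-pair-and-recurse induction on $m$, while you solve the grouped residue equations $\sum_{v:\,d\mid v} c_v/v=0$ by downward induction on the weight values; this is only a difference in bookkeeping, not in the underlying idea.
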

\begin{proof} Proposition \ref{prop3} yields that $m$ is even and
$$
L_W(z)=\sum\limits_{i=1}^m\varepsilon_i{\frac{z^{b_i}+1}{z^{b_i}-1}} = 0.
$$

We may assume that $b_m\ge b_{m-1}\ge\ldots\ge b_1>0.$ Since $(z^{b_m}-1)^{-1}$ has poles at the $b_m$ roots of unity, there is an integer $l<m$ such that $b_l=b_m$ and $\varepsilon_l=-\varepsilon_m$. We can remove these two powers from $W$ and apply the same arguments for $W'$ with $m-2$ weights.
\end{proof}

\subsection{The case $m=2$.}
Consider the following three sets $W=\{(w_{ij})_{2\times n}, s=\{\varepsilon_1,\varepsilon_2\}\}$  of powers with signs:
\begin{enumerate}
 \item[$(Z):$] $w_{1i}=w_{2i}=a_i$, where $a_i\in{\Bbb Z}, \, a_i\ne0,$ for all $i=1,\ldots,n$,
and  $\varepsilon_1=-\varepsilon_2$.

 \item[$(L_1):$] $n=1$, $w_{11}=a, \, w_{21}=-a$, where $a>0$, and $\varepsilon_1=\varepsilon_2$.

 \item[$(S_3):$] $n=3$,   $(w_{11}, w_{12}, w_{13})=(a,b,-(a+b))=-(w_{21}, w_{22}, w_{23})$, where $a$ and $b$ are positive integer, and $\varepsilon_1=\varepsilon_2$.
\end{enumerate}

The following theorem has been proved in \cite{mus16}. Note that its proof is included here for a local completeness.

\begin{theorem}\label{weights}
Let $W$ be rigid  with $m=2$.  Then $W$ can only be $Z$, $L_1$ or $S_3$.
\end{theorem}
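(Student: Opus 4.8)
The plan is to characterize all rigid sets with $m=2$ by studying the equation
$$
T_{x,y}^W(z) = \varepsilon_1\prod_{j=1}^n\frac{xz^{w_{1j}}+y}{z^{w_{1j}}-1} + \varepsilon_2\prod_{j=1}^n\frac{xz^{w_{2j}}+y}{z^{w_{2j}}-1} = \text{const.}
$$
I would first reduce to positive weights using the normalization already introduced for $L$-rigidity: replacing $w_{ij}$ by $-w_{ij}$ flips a factor $\frac{xz^{-w}+y}{z^{-w}-1} = \frac{x+yz^{w}}{1-z^w} = -\frac{yz^{w}+x}{z^w-1}$, which swaps the roles of $x$ and $y$ up to sign. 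So after recording signs $\varepsilon_i$ and a swap of $(x,y)$, I may assume all $w_{ij}>0$, and the rigidity condition becomes the statement that two rational functions with poles at roots of unity must cancel all their $z$-dependence.

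The key step is a pole-cancellation argument, analogous to the proof of Theorem \ref{prop4}. The function $\prod_j\frac{xz^{w_{1j}}+y}{z^{w_{1j}}-1}$ has poles located at the $w_{1j}$-th roots of unity (for various $j$), and similarly for the second row. Since the total sum is constant (hence pole-free), the two rows must share the same pole set with matching residues. First I would examine the largest weight, say $\max_j w_{1j}$, and its associated primitive roots of unity: a pole at a primitive $d$-th root of unity in row $1$ must be cancelled by a pole from row $2$, forcing a weight in row $2$ divisible by $d$ in a compatible way. Tracking these divisibility and residue constraints should force the two weight multisets $\{w_{1j}\}$ and $\{w_{2j}\}$ to be very tightly related — either equal (leading to case $Z$, where the constant forces $\varepsilon_1=-\varepsilon_2$) or negatives of each other (after undoing the normalization, leading to cases $L_1$ and $S_3$).

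To separate the remaining cases I would compute the value of the constant using Proposition \ref{prop1} (resp.\ equation (\ref{rig-e1})), which expresses the constant as $\varepsilon_1 x^{s_1^+}(-y)^{s_1^-} + \varepsilon_2 x^{s_2^+}(-y)^{s_2^-}$ in terms of sign counts. When the weight multisets coincide with $\varepsilon_1=-\varepsilon_2$ the constant is $0$, which is consistent and gives $Z$. In the opposite-sign situation ($\varepsilon_1=\varepsilon_2$, weights negated) the residue matching at roots of unity imposes a balancing condition on the weights; I expect that analyzing the residue at $z=1$ (a pole of order $n$ coming from every factor $\frac{xz^w+y}{z^w-1}$) forces the leading behavior, and then the structure of poles at nontrivial roots forces $n\le 3$, yielding exactly $L_1$ ($n=1$) and $S_3$ ($n=3$, with the weights $(a,b,-(a+b))$ summing to zero so that the pole orders at $z=1$ match).

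The hard part will be the residue bookkeeping at the various primitive roots of unity: unlike the $n=1$ case where each factor contributes a single simple pole, here each row is a product of $n$ factors, so poles can have higher order and coincide in complicated ways, and I must rule out all intermediate weight configurations that are neither equal nor exact negatives. The crucial constraint I expect to exploit is that a common pole must have matching Laurent tails on both sides; carefully extracting the highest-order term at the most singular point should collapse the combinatorial possibilities down to the three listed families.
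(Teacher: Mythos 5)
Your opening steps track the paper's proof: the pole--cancellation argument showing the two rows must have matching absolute values of weights, so that $W$ is either $Z$ (with $\varepsilon_1=-\varepsilon_2$ forced) or has $w_{1i}=-w_{2i}$ for all $i$, is exactly how the paper begins. But the hard part --- which you yourself flag as ``the hard part'' --- is precisely where your plan has no workable mechanism, and where the paper does something you never propose. After reducing to the negated-rows case, written as in (2.1) with positive weights $a_1,\dots,a_k$ and $b_1,\dots,b_\ell$, $k+\ell=n$, the paper exploits the fact that rigidity is an identity in \emph{all three} variables $x,y,z$: substituting $y=0$ forces $a_1+\cdots+a_k=b_1+\cdots+b_\ell$, and then substituting $x=-z^a$, $y=1$ (with $a=\max_i a_i$) turns the identity into a polynomial equation whose degree count forces $ka=b_1+\cdots+b_\ell$, hence $a_1=\cdots=a_k=a$ with $k$ odd and $\ell$ even; a final factorization identity then yields $k=1$, $\ell=2$, $a=b_1+b_2$, i.e.\ exactly $L_1$ and $S_3$. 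None of this appears in your sketch.

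What you offer instead --- residue bookkeeping at $z=1$ and at nontrivial roots of unity for fixed $(x,y)$ --- is too weak as stated. The leading Laurent coefficient at $z=1$ only gives a parity constraint (it forces $n$ odd in the negated case, since the leading terms cancel exactly when $1+(-1)^n=0$), and your assertion that ``the structure of poles at nontrivial roots forces $n\le 3$'' is not an argument: ruling out, say, $n=5$ configurations $(a_1,\dots,a_k,-b_1,\dots,-b_\ell)$ by pure pole analysis is essentially the whole content of the theorem, and you give no indication of how the combinatorics would collapse. Indeed, working at fixed $(x,y)$ largely discards the two-parameter freedom that the theorem's hypothesis provides; the paper's substitutions $y=0$ and $x=-z^a$ are exactly the device that converts that freedom into the arithmetic constraints $\sum a_i=\sum b_j$ and $ka=\sum b_j$, and without some substitute for it your plan stalls after the case split.
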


\begin{proof} Without loss of generality we may assume that
$$
|w_{k1}|\ge|w_{k2}|\ge \ldots\ge |w_{kn}|, \quad k=1,2.
$$
  It is easy to see that $|w_{1i}|=|w_{2i}|$
for all $i=1,\ldots,n.$
Moreover, if $W$ is not $Z$, then $w_{1i}=-w_{2i}$ for all $i$. Otherwise,
$T_{x,y}^W(z)$ cannot be a constant. Therefore, $T_{x,y}^W(z)$ can be written in the following form:

$$
T_{x,y}^W(z)=\frac{(xz^{a_1}+y)\cdots(xz^{a_k}+y)(x+yz^{b_1})\cdots(x+yz^{b_\ell})}
{(z^{a_1}-1)\cdots(z^{a_k}-1)(z^{b_1}-1)\cdots(z^{b_\ell}-1)} -
$$
$$
\frac{(x+yz^{a_1})\cdots(x+yz^{a_k})(xz^{b_1}+y)\cdots(xz^{b_\ell}+y)}
{(z^{a_1}-1)\cdots(z^{a_k}-1)(z^{b_1}-1)\cdots(z^{b_\ell}-1)}=x^ky^\ell-x^\ell y^k,
\eqno (2.1)
$$
where $k+\ell=n$ and all $a_i$ and $b_j$ are positive integers.

\vskip .1cm
So for $n=1$ we have that $W$ is $L_1$. If $n>1$, then $k>0$ and $\ell>0$. For $y=0$, the equation (2.1) implies
$$
a_1+\ldots+a_k=b_1+\ldots+b_\ell \eqno (2.2)
$$

Without loss of generality we may assume that $a:=a_1\ge a_i$ for all $i$ and $a_1>b_j$
for all $j$. (As we mentioned above, we cannot have the equality $a_1=b_j$.)
Let $x=-z^a$ and $y=1$. Then (2.1) yields
$$
\frac{(z^{2a}-1)\cdots(z^{a_k+a}-1)(z^{a}-z^{b_1})\cdots(z^{a}-z^{b_\ell})}
{(z^{a}-1)\cdots(z^{a_k}-1)(z^{b_1}-1)\cdots(z^{b_\ell}-1)}
=(-1)^\ell z^{ak}-(-1)^k z^{a\ell} \eqno (2.3)
$$
Since $a>b_j$, we have $a\ell>b_1+\ldots+b_\ell$. Then (2.3) implies
$$
ka=b_1+\ldots+b_\ell \eqno (2.4)
$$
Therefore, from (2.2) we have $a_1+\ldots+a_k=ka$, $a_1=\ldots=a_k=a$, $k$ is odd, and
$\ell$ is even. Then
$$
\frac{(z^{a}+1)^k(z^{a-b_1}-1)\cdots(z^{a-b_\ell}-1)}
{(z^{b_1}-1)\cdots(z^{b_\ell}-1)}
= z^{a(\ell-k)}+1 \eqno (2.5)
$$
Equation (2.5) yields $k=1$, $\ell=2$ and $a=b_1+b_2$. Thus, it is the case $S_3$.
\end{proof}

\subsection{The case $m=3$.}

\begin{theorem} \label{th23}
Let $W$ be $L$--rigid with $m=3$ and $n=2$.  Then $W$ is quasilinear.
\end{theorem}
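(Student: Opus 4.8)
The plan is to pin down $W$ by analysing the poles of $L_W(z)$ at roots of unity together with its behaviour at $z=\infty$ and at $z=1$. Write the three rows as $(p_i,q_i)$ with signs $\varepsilon_i\in\{\pm1\}$ and all entries positive, as permitted by the normalisation of $L$--rigidity. By Proposition~\ref{prop3}, $L_W(z)\equiv \varepsilon_1+\varepsilon_2+\varepsilon_3=:E$ is odd, so $E\in\{\pm1,\pm3\}$ and in particular $E\neq0$. First I would rule out $E=\pm3$, i.e.\ all signs equal. If all $\varepsilon_i=+1$, then for every real $z>1$ each factor $\frac{z^w+1}{z^w-1}=1+\frac{2}{z^w-1}$ exceeds $1$, so $L_W(z)>3$, while $L_W(z)\to 3$ as $z\to\infty$; a constant function cannot do both. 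Hence the signs are not all equal, and after a global sign change I may assume $E=1$, with two rows carrying $+$ and one carrying $-$.

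Next I would exploit the largest weight $M=\max_{i,j}w_{ij}$. Only rows containing $M$ have a pole at a primitive $M$-th root of unity $\zeta$, so these poles must cancel among themselves. A row equal to $(M,M)$ would give a double pole which cannot be matched (two such rows would have to be identical with opposite signs, hence would cancel entirely and leave a single non-constant row), and if $M$ occurred in just one row its pole could not cancel at all. So $M$ occurs in two or three rows, each exactly once. The decisive case to exclude is that $M$ occurs in \emph{all three} rows; this is exactly the configuration absent from quasilinear sets, and it is the main obstacle. Here the three rows share the factor $\frac{z^M+1}{z^M-1}$, so $L_W(z)=\frac{z^M+1}{z^M-1}\,G(z)$ with $G(z)=\sum_i\varepsilon_i\frac{z^{v_i}+1}{z^{v_i}-1}$ and $v_i<M$, and rigidity forces $G(z)=\frac{z^M-1}{z^M+1}$. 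Expanding both sides in powers of $z^{-1}$ for $|z|>1$, the coefficient of $z^{-N}$ gives $\sum_{i:\,v_i\mid N}\varepsilon_i=0$ whenever $M\nmid N$, and $\sum_{i:\,v_i\mid M}\varepsilon_i=-1$. Since $E=1$, the value $-1$ at $N=M$ forces the weight $u$ of the unique $-$ row to divide $M$ while the two $+$ weights do not; but then $u<M$ gives $M\nmid u$, so the relation at $N=u$ should vanish, whereas $\sum_{i:\,v_i\mid u}\varepsilon_i=-1\neq0$ (the $+$ weights cannot divide $u$, else they would divide $M$). This contradiction shows $M$ occurs in exactly two rows.

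These two rows, say $(M,a)$ and $(M,b)$, must carry equal signs: opposite signs would give $\frac{\zeta^{a}+1}{\zeta^{a}-1}=\frac{\zeta^{b}+1}{\zeta^{b}-1}$ at all primitive $M$-th roots, hence $a=b$ by injectivity of $t\mapsto\frac{t+1}{t-1}$, and the two rows would cancel, again leaving a single row. So they are the two $+$ rows, and the residue cancellation $\frac{\zeta^{a}+1}{\zeta^{a}-1}+\frac{\zeta^{b}+1}{\zeta^{b}-1}=0$, which is equivalent to $\zeta^{a+b}=1$ for all primitive $M$-th roots, yields $a+b=M$. The third (the $-$) row is $(c,d)$ with $c,d<M$.

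Finally I would pin down $(c,d)$ from the Laurent expansion of $L_W(e^t)\equiv E$ at $t=0$, using $\frac{e^{wt}+1}{e^{wt}-1}=\coth(wt/2)=\frac{2}{wt}+\frac{wt}{6}-\cdots$. Vanishing of the $t^{-2}$ coefficient gives $\frac{1}{Ma}+\frac{1}{Mb}-\frac{1}{cd}=0$, which together with $a+b=M$ simplifies to $cd=ab$. Matching the $t^{0}$ coefficient to $E=1$ gives $\sum_i\varepsilon_i\frac{p_i^2+q_i^2}{p_iq_i}=3$, and substituting $a+b=M$ and $cd=ab$ reduces this to $c^2+d^2=a^2+b^2$. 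Then $cd=ab$ and $c^2+d^2=a^2+b^2$ force $\{c,d\}=\{a,b\}$. Hence the rows are $(M,a),(M,b),(a,b)$ with signs $+,+,-$ and $a+b=M$, which is precisely the quasilinear set associated with the distinct integers $\{a_1,a_2,a_3\}=\{0,a,a+b\}$. The only genuinely delicate point is the exclusion of the three-shared-$M$ configuration; the positivity normalisation, the two-row residue identities, and the final expansion computations are all short.
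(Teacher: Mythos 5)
Your case analysis of how often the maximum weight $M$ can occur contains a genuine gap: the claim ``if $M$ occurred in just one row its pole could not cancel at all'' is false as stated, because the pole need not exist in the first place. Concretely, if the unique row containing $M$ is $(M,M/2)$ (with $M$ even), then at every primitive $M$-th root of unity $\zeta$ one has $\zeta^{M/2}=-1$, so the numerator factor $z^{M/2}+1$ cancels the simple zero of $z^{M}-1$; indeed
\[
\frac{(z^{M}+1)(z^{M/2}+1)}{(z^{M}-1)(z^{M/2}-1)}=\frac{z^{M}+1}{(z^{M/2}-1)^{2}},
\]
which is regular at all primitive $M$-th roots of unity. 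So the ``$M$ in exactly one row'' configuration is not excluded by your residue argument, while everything that follows (equal signs of the two $M$-rows, $a+b=M$, the $\coth$ expansions) presupposes that $M$ lies in exactly two rows. The missing case can be closed, but it needs its own argument: writing $c=M/2$, the displayed row has \emph{double} poles at every $c$-th root of unity $\omega$ with leading coefficient $2\eta_1\omega^{2}/c^{2}$; since the other two rows have all weights $<2c$, they contribute a double pole at $\omega$ only if they equal $(c,c)$, with leading coefficient $4\eta_i\omega^{2}/c^{2}$, and $2\eta_1+4\eta_2\delta_2+4\eta_3\delta_3\equiv 2 \pmod 4$ can never vanish. (Your exclusion of $(M,M)$ rows is safe from this subtlety, since there the numerator $(\zeta^{M}+1)^{2}=4\neq0$.)

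Apart from this missing case, your argument is correct and follows a genuinely different route from the paper's: the paper orders the weights, shows the maximal weight must occur in both positive rows ($a_2=b_2$), and then recombines (2.6) into (2.7) to read off quasilinearity, whereas you classify the multiplicity of $M$ by residues at primitive $M$-th roots, kill the three-shared-$M$ configuration with a clean divisor-sum (expansion in $z^{-1}$) contradiction, and finish with the Laurent expansion of $\coth$ at $z=1$ to force $\{c,d\}=\{a,b\}$ -- all of which I checked and which is in places more careful than the paper (e.g.\ your explicit exclusion of $E=\pm3$). It is only fair to note that the paper's own proof elides the very same subtlety -- ``if $c_2>b_2$, then (2.6) has poles at the $c_2$ roots of unity'' fails verbatim when $c_1=c_2/2$ -- but as your proof stands, the $(M,M/2)$ single-row case is a concrete hole that must be patched before the two-row analysis can legitimately begin.
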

\begin{proof}
In this case we have the following equation for six positive  integers $a_1$, $a_2$, $b_1$, $b_2$, $c_1$, $c_2$:
$$
\frac{(z^{a_1}+1)(z^{a_2}+1)}{(z^{a_1}-1)(z^{a_2}-1)}+ \frac{(z^{b_1}+1)(z^{b_2}+1)}{(z^{b_1}-1)(z^{b_2}-1)}=\frac{(z^{c_1}+1)(z^{c_2}+1)}{(z^{c_1}-1)(z^{c_2}-1)} +1 \eqno (2.6)
$$
Without loss of generality we can assume that $a_1\le a_2$, $b_1\le b_2$, $c_1\le c_2$ and $a_2\le b_2$. Let us show that $b_2>c_2$. Indeed, otherwise $c_2>b_2$ or $c_2=b_2$. If $c_2>b_2$, then (2.6) has poles at the $c_2$ roots of unity. If $c_2=b_2$, then $c_2\ge a_2$ and
$$
(u^{a_1}+1)(u^{a_2}+1)=(u^{a_1}-1)(u^{a_2}-1), \quad \mbox{ i.e. } \quad u^{a_2-a_1}+1=0,
$$
where $u$ is any of the roots of the equation $z^{c_2}+1=0$, a contradiction.

We have that $b_2$ is the maximum power.  Since (2.6) has no poles at the $b_2$ roots of unity, there is one more maximum  power  and it is $a_2$, i.e. $a_2=b_2$. Therefore, equation (2.6) can be written in the following form
$$
\frac{(z^{a_2}+1)(z^{a_1+b_1}-1)}{(z^{a_2}-1)(z^{a_1}-1)(z^{b_1}-1)}=\frac{z^{c_1+c_2}+1}{(z^{c_1}-1)(z^{c_2}-1)}  \eqno (2.7)
$$
Equation (2.7) implies that $a_1+b_1=c_1+c_2=a_2$ and $a_1=c_1$, $b_1=c_2$ or $a_1=c_2$, $b_1=c_1$. Thus, $W$ is quasilinear.
\end{proof}

It is not difficult to see that for the case $n=2$ and  any $m$,  by similar arguments as above, all $W$ can be classified. However, the case of $m=3$ and any even $n$ looks more difficult  for which we do not have a ready solution now. Actually, this problem is equivalent to the following:
\begin{problem}  Consider an equation for $3n$ positive  integers $a_i$,  $b_i$,  and $c_i$, where $i=1,\ldots, n$:
$$
\frac{(z^{a_1}+1)\ldots(z^{a_n}+1)}{(z^{a_1}-1)\ldots(z^{a_n}-1)}+ \frac{(z^{b_1}+1)\ldots(z^{b_n}+1)}{(z^{b_1}-1)\ldots(z^{b_n}-1)}=\frac{(z^{c_1}+1)\ldots(z^{c_n}+1)}{(z^{c_1}-1)\ldots(z^{c_n}-1)} +1.
$$
Find the largest $n$ for which there exists a solution of this equation. Is it true that there are no solutions for $n>2$?
\end{problem}


\section{Unitary circle action, Kosniowski conjecture and  rigid Hirzebruch genera}

\subsection{Unitary circle action and Kosniowski conjecture}
We say that $M$ is a  {\it unitary ${\bf S}^1$-manifold} if it is a smooth closed manifold
with an effective circle action such that its tangent bundle admits an
${\bf S}^1$-equivariant stable complex structure.

\vskip .2cm
Each component of the fixed point set of a unitary ${\bf S}^1$-manifold $M$ is again a unitary manifold, and its normal bundle to $M$ is a complex ${\bf S}^1$-vector bundle with a complex structure induced from the one on $TM\oplus\underline{\mathbb{R}}^r$. In particular, the tangent space $T_pM$ at an isolated ${\bf S}^1$-fixed point $p$ is a complex
${\bf S}^1$-module. Thus, if $M$ has an isolated ${\bf S}^1$-fixed point, then the dimension of $M$ must be even.

\vskip .2cm

Let $M$ be a unitary $2n$-dimensional ${\bf S}^1$-manifold $M$ with $m$ isolated fixed
points $p_1,\ldots,p_m$.
Denote  weights in the tangent ${\bf S}^1$-representation at $p_i$ by $w_{i1},\ldots,w_{in}$ and its sign
by $\varepsilon_i$, where $\varepsilon_i=\pm 1$. For the notions of weights and signs at isolated points, see~\cite{Kos, BPR, BP}.

\vskip .2cm

Bott residue formula  tells us how to calculate the Chern numbers of $M$ in terms of the set of weights and signs:

\begin{equation}\label{bott}
\langle c_1^{r_1}\cdots c_n^{r_n}, [M]\rangle
=\sum_{i=1}^m{{\sigma_1^{r_1}(w_{i1},\ldots,w_{in})\cdots \sigma_n^{r_n}(w_{i1},\ldots,w_{in})}\over{\varepsilon_i\prod_{j=1}^nw_{ij}}}
\end{equation}
where $r_1+2r_2+\cdots+nr_n=n$ and $\sigma_i$ denotes the $i$-th elementary symmetric function.
Note that if $r_1+2r_2+\cdots+nr_n<n$, then $\langle c_1^{r_1}\cdots c_n^{r_n}, [M]\rangle=0$. This implies that
we obtain many equations for the set of weights and signs (cf. \cite{AS}, \cite{B}).

 \vskip .2cm

 Kosniowski in \cite{Kos} posed the following conjecture.
  \begin{conj}[Kosniowski]\label{conj}
   A unitary $2n$--dimensional ${\bf S}^1$--manifold
 that is not a boundary has at least $[{n/2}]+1$ isolated fixed points.
\end{conj}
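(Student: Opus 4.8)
The plan is to convert Kosniowski's conjecture into the purely number-theoretic power--rigidity problem and then to settle the latter in the ranges where it is tractable, namely $\dim M\le 10$ (i.e. $n\le 5$) and $\dim M=14$ (i.e. $n=7$); the conjecture is open in general, so this is the realistic target. First I would record the dictionary furnished by the Bott residue formula \eqref{bott}: the weights $w_{i1},\dots,w_{in}$ and signs $\varepsilon_i$ at the $m$ isolated fixed points of a unitary $2n$-dimensional ${\bf S}^1$-manifold assemble into a set $W$ of powers with signs, and the localized Hirzebruch $T_{x,y}$-genus of $M$ is exactly $T_{x,y}^W(z)$. Because this genus is rigid under the circle action, $T_{x,y}^W(z)$ is independent of $z$, so $W$ is rigid and, by Proposition~\ref{prop1}, its constant value is the polynomial $\sum_i\varepsilon_i x^{s_i^+}(-y)^{s_i^-}$. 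Since a null-cobordant manifold has vanishing Chern numbers and hence vanishing genus, the hypothesis that $M$ is \emph{not} a boundary forces this polynomial to be nonzero, i.e. $T_{x,y}^W(z)\ne 0$ for suitable $x,y$. In this way the conjecture becomes precisely Problem~\ref{prob}$(ii)$: a rigid $W$ with nonvanishing genus must have $m\ge[n/2]+1$.

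Next I would attack Problem~\ref{prob}$(ii)$ by casework on the number of rows $m$, matching the thresholds $[n/2]+1$ against the classification of rigid sets with few rows. The case $m=1$ is immediate, since a single product $\prod_j(xz^{w_{1j}}+y)/(z^{w_{1j}}-1)$ has genuine poles at roots of unity and so cannot be a nonzero constant. The case $m=2$ is controlled by Theorem~\ref{weights}: a rigid two-row set is one of $Z$, $L_1$, $S_3$, and a short check shows that $Z$ has genus $0$ while only $L_1$ (with $n=1$) and $S_3$ (with $n=3$) have nonvanishing genus. Consequently a nonzero rigid $W$ with $m\le 2$ can occur only for $n\in\{1,3\}$, so for $n\in\{2,4,5\}$ the bound is established the instant $m\le 2$ is excluded, and the cases $n=1,3$ (threshold $\le 2$) follow directly. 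This already disposes of all dimensions $\le 10$.

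The remaining and decisive case is $m=3$. Theorem~\ref{th23} handles $m=3,\ n=2$ (forcing quasilinearity, hence $m=n+1$), and to reach dimension $14$ I must rule out any nonzero rigid $W$ with $m=3$ and $n=7$. Here I would exploit the functional equation $L_W(z^{-1})=(-1)^nL_W(z)$ from the proof of Proposition~\ref{prop3}: for odd $n$ it yields a strong symmetry on the weights, which I would combine with the pole-matching technique of Theorems~\ref{weights} and \ref{th23}, namely ordering the powers, locating the largest one via the absence of poles at the corresponding roots of unity, and specializing $x,y,z$ (as in setting $x=-z^a$, $y=1$) to extract Diophantine relations among the three rows, after which one enumerates the finitely many possibilities. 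The main obstacle is exactly the $m=3$ analysis for \emph{even} $n$: there the symmetry coming from the functional equation is far weaker, the bookkeeping of pole cancellations at all roots of unity becomes combinatorially unwieldy, and no uniform lower bound on $m$ is available. This is why dimension $12$ (where $n=6$ is even and one would need to exclude $m=3$) lies outside the method's reach, and why a complete answer to Problem~\ref{prob}$(ii)$ — and hence the full conjecture for general $n$ — remains the hard open point.
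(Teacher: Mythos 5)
Your overall strategy---rigidity of the $T_{x,y}$--genus together with the classification of rigid sets with few rows---is the same as the paper's, but there are two genuine gaps. First, your reduction to Problem~\ref{prob}$(ii)$ rests on the implication ``$M$ is not a boundary $\Rightarrow T_{x,y}^W(z)\ne 0$'', and this is backwards: null-cobordant implies vanishing genus, but a non-bounding manifold can perfectly well have identically vanishing $T_{x,y}$--genus, since the genus is one particular combination of Chern numbers and captures only a small quotient of $U_*\otimes\mathbb{Q}$. This is exactly why the paper says Problem~\ref{prob}$(ii)$ is only ``essentially related'' to Kosniowski's conjecture and remarks that it is not clear the conjecture can be proved using only the rigidity of the $T_{x,y}$--genus. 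The consequence is concrete in your $m=2$ step: knowing that case $(Z)$ has zero genus does not allow you to discard it for a non-bounding $M$. What the paper does instead is show that $(Z)$ forces $M$ to bound: in the Bott residue formula (\ref{bott}) the contributions of the two fixed points cancel termwise (identical weights, opposite signs), so \emph{all} Chern numbers vanish and $M$ is a boundary by Milnor--Novikov. With that fix, your treatment of $n\le 5$ (thresholds at most $3$, and $m\le 2$ forcing $n\in\{1,3\}$) goes through as in the paper.

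Second, your plan for $n=7$ is both incomplete and misdirected. You propose to exclude nonzero rigid $W$ with $m=3$, $n=7$ by pole-matching and enumeration; nothing is actually proved there, and even if it were completed it would not suffice, because by the first gap a non-bounding manifold with $m=3$ could have a zero-genus rigid weight set, which your exclusion would never touch. The paper avoids the $m=3$ analysis entirely with a parity argument that you overlook even though you quote its source: by Proposition~\ref{prop3}, for an $L$--rigid $W$ with $n$ odd one has $L_W(z)=0$, and since $L_W\equiv m\pmod 2$ this forces $m$ to be even. Combining with $m\ge 3$ (established for non-bounding $M$ with $n\ne 1,3$) gives $m\ge 4=[7/2]+1$ at once, and the same parity step also settles $n=5$ with room to spare. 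No classification of three-row rigid sets is needed; this is precisely why dimension $14$ is accessible while dimension $12$ (where $n=6$ is even and parity gives nothing beyond $m\ge 3$) is not.
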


In his seminar paper~\cite{Kos}, Kosniowski showed that the conjecture holds if the number of isolated points is less than three.

\vskip .2cm

 Recently some related works have been carried on with respect to this conjecture.
For example,  Pelayo and Tolman in~\cite{pt} considered compact
symplectic manifolds $M^{2n}$ with symplectic circle actions fixing isolated points, and
showed that if the weights  satisfy some subtle condition, then the action
has at least~$n+1$ isolated fixed points. In~\cite{ll}, Ping Li and Kefeng
Liu showed that if~$M^{2mn}$ is an almost complex manifold such that there is some nonzero Chern number~$\langle(c_{\lambda_1}\dots
c_{\lambda_r})^n, [M]\rangle$ where $\lambda=(\lambda_1,\dots,\lambda_r)$ is a partition of
$m$, then for any~$S^1$-action on~$M$,
it must have at least~$n+1$ fixed points.

\vskip .2cm

 On the other hand, for unitary
 ${\bf T}^n$--manifolds $M$ of dimension $2n$, it was showed in \cite{LuTan}  that if $M$ is non-bounding, then
 there are at least $\lceil{n/2}\rceil+1$ isolated fixed points (see also \cite{Lu13,Lu}).
 It is known from \cite[Lemma 4.2.1]{AP} or \cite[Lemma 7.4.3]{BP} that if $M$ is a connected smooth orientable closed manifold admitting an action of torus group $T^k$, then there is a circle subgroup $S<T^k$ such that $M^S=M^{T^k}$,  which is essentially based upon \cite[Theorem 10.5, Chapter IV]{Bre}.
 This means that Kosniowski conjecture holds in the setting of unitary
 ${\bf T}^n$--manifolds $M$ of dimension $2n$,  providing a support evidence.

\vskip .2cm

The following theorem also gives an affirmative answer to Kosniowski conjecture in the case of $n\leq 5$ or $n=7$.

\begin{theorem} \label{K-conj}
If $n\leq 5$ or $n=7$, then a unitary $2n$-dimensional ${\bf S}^1$-manifold
 that is not a boundary must have at least $[{n/2}]+1$ isolated fixed points.
\end{theorem}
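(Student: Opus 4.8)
The plan is to reinterpret the statement entirely inside the power--rigidity framework of Sections 1--2. A unitary $2n$-dimensional ${\bf S}^1$-manifold $M$ with $m$ isolated fixed points produces an $m\times n$ weight matrix together with signs $\varepsilon_i=\pm1$, i.e.\ a set of powers with signs $W=\{w,s\}$; by the equivariant rigidity of the $T_{x,y}$-genus (Section 3) this $W$ is rigid in the sense of Definition \ref{rigid}, and by the Bott residue formula (\ref{bott}) the manifold $M$ is a boundary precisely when every Chern number $\langle c_1^{r_1}\cdots c_n^{r_n},[M]\rangle$ vanishes. Thus it suffices to show that for $n\le 5$ or $n=7$ a non-bounding $M$ (one with some nonzero Chern number) must have $m\ge[n/2]+1$, and I would do this by ruling out all smaller values of $m$.

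First I would dispose of the smallest cases. If $m=0$ the action is free, so $M$ bounds; if $m=1$ then $T_{x,y}^W(z)$ is a single product $\varepsilon_1\prod_j\frac{xz^{w_{1j}}+y}{z^{w_{1j}}-1}$, which for $n\ge 1$ has genuine poles and cannot be constant, contradicting rigidity. Hence a non-bounding $M$ has $m\ge 2$. Next, when $n$ is odd, Proposition \ref{prop3} applies to the $L$-rigid data underlying $W$ and forces $m$ to be even; this single parity observation is exactly what separates the odd cases $n=5,7$ from larger odd $n$.

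The heart of the argument is the case $m=2$. Here Theorem \ref{weights} tells us that $W$ must be one of $Z$, $L_1$, or $S_3$. The configurations $L_1$ and $S_3$ occur only for $n=1$ and $n=3$ respectively, so they are irrelevant once $n\notin\{1,3\}$. For the remaining type $Z$, the two fixed points carry identical weights $w_{1j}=w_{2j}$ and opposite signs $\varepsilon_1=-\varepsilon_2$; substituting this into (\ref{bott}) makes the two summands of every Chern number exactly cancel (using $1/\varepsilon_i=\varepsilon_i$), so all Chern numbers vanish and $M$ bounds. Consequently a \emph{non-bounding} $M$ with $m=2$ can only have $n\in\{1,3\}$.

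Assembling these facts gives the theorem. For $n=1,3$ the bound $[n/2]+1\le 2$ is met by $m\ge 2$; for even $n=2,4$ the exclusion of type $Z$ upgrades this to $m\ge 3=[n/2]+1$; and for odd $n=5,7$ the parity constraint together with the exclusion of $m=2$ forces $m\ge 4\ge[n/2]+1$. I expect the main obstacle to be precisely what prevents pushing past this range: for even $n=6$ one would have to rule out $m=3$, and for odd $n\ge 9$ one would have to rule out $m=4$, both of which require classifying rigid $W$ with $m=3$ (and beyond) — the still-open problem highlighted at the end of Section 2, rather than a routine extension of the $m=2$ analysis.
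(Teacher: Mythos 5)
Your proposal is correct and takes essentially the same route as the paper: classify $m=2$ via Theorem \ref{weights}, kill type $(Z)$ by cancellation in the Bott residue formula (\ref{bott}), invoke the parity constraint of Proposition \ref{prop3} for odd $n$, and assemble the bounds case by case. The only deviations are cosmetic and harmless: you exclude $m=1$ by a pole argument on $T_{x,y}^W(z)$ where the paper uses the $c_0$ Bott-residue computation, and for $m=0$ the action is only fixed-point free rather than free (the bounding conclusion still follows from (\ref{bott})).
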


 Our proof relies on the fact that $T_{x,y}$--genus is rigid. We shall introduce this method below.
However, it is not clear if Kosniowski's conjecture can be proved using only the rigidity of the
$T_{x,y}$--genus.

\begin{remark}
It should be pointed out that in the setting of almost complex closed ${\bf S^1}$-manifolds  fixing only isolated points, Jang showed in \cite{J} that if an almost complex closed ${\bf S^1}$-manifold $M$ fixes only three isolated points, then $\dim M$ must be 4. This implies that Kosniowski conjecture holds  in the setting of almost complex closed ${\bf S^1}$-manifolds of dimension $\leq 14$ fixing only isolated points.
\end{remark}


\subsection{Rigid Hirzebruch genera}

Let $U_*$ be the complex bordism ring with coefficients in $R={\Bbb Q}$, ${\Bbb R}$, or
${\Bbb C}$. For a closed smooth stably complex manifold $M$, Hirzebruch \cite{Hir1}
defined a multiplicative genus $h(M)$ by a homomorphism $h:U_*\otimes R\to R$.

\vskip .2cm

Recall that according to Milnor and Novikov, two stably complex  manifolds are complex
cobordant if and only if they have the same Chern numbers (see \cite{M,N}). Therefore, for any
multiplicative genus $h$ there exists a multiplicative sequence of polynomials
$\{K_i(c_1,\ldots,c_i)\}$  such that
$
h(M)=K_n(c_1,\ldots,c_n),
$
where the $c_k$ are the Chern classes of $M$ and $n=\dim_{\Bbb C}(M)$ (see \cite{Hir1}).

\vskip .2cm
Let $U_*^G$ be the ring of complex bordisms of manifolds with actions of a compact Lie
group $G$. Then for any homomorphism $h:U_*\otimes R\to R$ can be define  an
{\it equivariant genus} $h^G$, i.e. a homomorphism
$$
h^G:U_*^G\otimes R\to K(BG)\otimes R
$$
(see details in \cite{krich1}).

\vskip .2cm

A multiplicative genus $h$ is called {\it rigid} if for any connected compact group $G$
the equivariant genus $h^G(M)=h(M)$. That means
$$
h^G:U_*^G\otimes R\to R\subset K(BG)\otimes R,
$$
i.e. $h^G([M,G])$ belongs to the ring of constants. It is well known
(see \cite{AH, krich1}) that ${\bf S}^1$-rigidity implies $G$-rigidity, i.e. it is
sufficient to prove rigidity only for the case $G={\bf S}^1$.

\vskip .2cm

For $G={\bf S}^1$, the universal classifying space $BG$ is ${\Bbb C}${P}$^\infty$, and
the ring $K(BG)\otimes R$ is isomorphic to the ring of formal power series  $R[[u]]$. Then
for any ${\bf S}^1$-manifold $M$ and a Hirzebruch genus $h$ we have
$h^{S^1}([M,{\bf S}^1])$ in $R[[u]]$. In the case of an ${\bf S}^1$-action on a unitary
manifold $M^{2n}$ with isolated fixed points $p_1\ldots,p_m$ with weights $
w_{i1},\ldots,w_{in}$, and signs $\varepsilon_i$, $i=1,\ldots,m$,  $\; h^{S^1}$ can be
found explicitly (see \cite{AH,BP,BPR,BR,krich1}):
\begin{equation}\label{e1}
h^{S^1}([M,{\bf S}^1])=S_h(\{w_{ij}\},u):=\sum\limits_{i=1}^m\varepsilon_i\prod_{j}
{\frac{H(w_{ij}\,u)}{w_{ij}u}},
\end{equation}
where $H$ is the characteristic series of $h$ (see \cite{Hir1,mus11}). If $h$ is rigid, then
from $(\ref{e1})$ it follows that
\begin{equation}\label{e2}
h(M)=S_h(\{w_{ij}\},u) \; \mbox{ for any } \; u.
\end{equation}

Atiyah and  Hirzebruch  \cite{AH} based on the Atiyah-Singer index theorem  proved that $L$--genus (signature) is rigid for oriented $S^1$--manifolds and
$T_y-$genus is rigid for almost complex $S^1$--manifolds.  Krichever \cite{krich1} gave a proof of rigidity of the
$T_{x,y}-$genus for almost complex $S^1$--manifolds using global analytic properties of $S_h(\{w_{ij}\},u)$. In \cite{BP,BPR,BR} this result was extended for unitary $S^1$--manifolds. Note that in \cite{mus11} it was showed that if $h$ is a rigid genus, then it is $T_{x,y}-$genus.

\vskip .2cm

Now it is not hard to see that $(3.2)$ yields the
Atiyah-Hirzebruch formula for a unitary ${\bf S}^1$-manifold $M$:
\begin{equation}\label{e3}
T_{x,y}(M)=\sum\limits_{i=1}^m {\varepsilon_ix^{s_i^+}(-y)^{s_i^-}}
\end{equation}
where $s_i^+, s_i^-$ are numbers of positive and negative weights $\{w_{ij}\}$.
In \cite{krich1} Krichever gave the following formula for the $T_{x,y}$--genus:
 \begin{equation}\label{e4}
 H_{x,y}(u)=\frac{u(xe^{u(x+y)}+y)}{e^{u(x+y)}-1}.
 \end{equation}
Let $z:=e^{(x+y)u}$. Then (\ref{e1}), (\ref{e2}),  (\ref{e3}) and (\ref{e4}) imply
\begin{equation}\label{e5}
\sum\limits_{i=1}^m\varepsilon_i\prod_{j} {\frac{xz^{w_{ij}}+y}{z^{w_{ij}}-1}}\equiv
\sum\limits_{i=1}^m {\varepsilon_ix^{s_i^+}(-y)^{s_i^-}}.
\end{equation}

\begin{remark}
It should be pointed out that the formula (\ref{e5}) with signs was first proved by Buchstaber--Panov--Ray in the 2010 paper~\cite{BPR}, and a shorter and more conceptual proof was given in \cite[Theorem 9.4.8]{BP}.
\end{remark}


\subsection{Rigidity of powers and $S^1$-actions on manifolds.}
Equation \ref{e5} provides the topological background of Section 1. Namely, the set of weights $W$ of  an oriented $S^1$-manifold is $L$--rigid and for unitary actions $W$ is $T_{x,y}$--rigid.

Let $M^{2n}$ be a compact oriented $S^1$-manifold with $m$ isolated fixed points. Then for the set of weights $W$ we have
$$
L_W(z)=\sum\limits_{i=1}^m{\varepsilon_i}=L(M).
$$
It is easy to see that Theorem \ref{th23} yields the following theorem.

\begin{theorem} Let $M$ be a compact  4-dimensional oriented $S^1$--manifold with three isolated fixed points. Then this $S^1$-action must be quasilinear.

\end{theorem}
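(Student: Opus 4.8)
The plan is to convert the geometric data into the combinatorial set-up of Subsection~1.2 and then invoke Theorem~\ref{th23} essentially verbatim. First I would fix the numerology: since $\dim M=4$ we have $n=2$, and the assumption of exactly three isolated fixed points gives $m=3$. At each isolated fixed point $p_i$ the circle acts on $T_{p_i}M\cong\mathbb{R}^4$ with no trivial summand, so the real representation splits into two oriented rotation $2$-planes; this records two positive weights $w_{i1},w_{i2}$ together with a sign $\varepsilon_i=\pm1$ comparing the orientation of $M$ at $p_i$ with the one induced by the two rotation planes. In this way one obtains precisely a set $W=\{w,s\}$ of weights and signs with $w$ a $3\times 2$ matrix of positive integers, in the normalised sense of Subsection~1.2.

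Next I would verify that this $W$ is $L$-rigid. This is exactly the topological input of Subsection~3.3: the signature ($L$-genus) is rigid for oriented $S^1$-manifolds by Atiyah--Hirzebruch, so the localisation identity~(\ref{e5}) specialised at $x=y=1$ forces $L_W(z)$ to be independent of $z$, with constant value $L_W(z)=\sum_{i=1}^{3}\varepsilon_i=L(M)$. Hence $W$ is an $L$-rigid set of powers with signs having $m=3$ and $n=2$.

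With this in hand the conclusion is immediate: Theorem~\ref{th23} asserts that every $L$-rigid $W$ with $m=3$, $n=2$ is quasilinear, so there exist distinct integers $a_1,a_2,a_3$ with weights $(w_{i1},w_{i2})=(a_i-a_j)_{j\neq i}$ at each $p_i$. By definition this says that the $S^1$-action is quasilinear, which is what we wanted.

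Since the argument is a transcription of Theorem~\ref{th23}, I expect no genuine difficulty in the combinatorics; the one place that requires care is the dictionary set up in the first two paragraphs, namely the passage from the tangent representations and the orientation of $M$ to the normalised data $W=\{w,s\}$ with positive weights, and the check that rigidity of the signature furnishes exactly the $L$-rigidity hypothesis of Theorem~\ref{th23}. For consistency one may also note, following Proposition~\ref{prop3}, that $L_W(z)\equiv m\equiv 1\pmod 2$, which both excludes an all-equal sign pattern and matches the quasilinear value $L_W(z)=1$.
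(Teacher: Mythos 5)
Your proposal is correct and follows exactly the paper's route: the paper likewise derives $L$-rigidity of the weight--sign data from the Atiyah--Hirzebruch rigidity of the signature (Subsection~3.3) and then cites Theorem~\ref{th23} for the case $m=3$, $n=2$. Your write-up merely makes explicit the dictionary between tangent representations and the normalised data $W=\{w,s\}$, which the paper leaves implicit in its one-line proof.
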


Suppose that $M^{2n}$ is a non-bounding unitary $2n$--dimensional ${\bf S}^1$-manifold fixing only $m$ isolated points $p_1, ..., p_m$ with weights $
w_{i1},\ldots,w_{in}$, and signs $\varepsilon_i$, $i=1,\ldots,m$. Then the $T_{x,y}$--rigidity implies the following equation:
$$
T_{x,y}^W(z)=T_{x,y}(M).
$$

For the case $m=2$, Theorem~\ref{weights} (see also \cite{mus16} or \cite[Theorem 5]{Kos}) tells us that  there are the following possibilities for the unitary ${\bf S}^1$-manifold $M^{2n}$:
\begin{enumerate}
 \item[$(Z):$]
 $n$ is a arbitrary integer,  $w_{1i}=w_{2i}=a_i$ where $a_i\in{\Bbb Z}, \, a_i\ne0,$ for all $i=1,\ldots,n$,
and  $\varepsilon_1=-\varepsilon_2$.
 \item[$(L_1):$] $n=1$, $w_{11}=a, \, w_{21}=-a$, where $a>0$, and $\varepsilon_1=\varepsilon_2$.
 \item[$(S_3):$] $n=3$, weights in $p_1$ and $p_2$ are $(a,b,-(a+b))$ and $(-a,-b,a+b)$
respectively, where $a$ and $b$ are positive integer, and $\varepsilon_1=\varepsilon_2$.

\end{enumerate}


\subsection{Proof of Theorem~\ref{K-conj}}
By Bott residue formula (\ref{bott}), we have that any unitary ${\bf S}^1$-manifold $M^{2n}$ with weights and signs satisfying $(Z)$ must be a boundary.
We can also prove a stronger fact that any unitary ${\bf S}^1$-manifold $M^{2n}$ with weights and signs satisfying  $(Z)$ bounds equivariantly, i.e., it is cobordant to zero in the ring $U_*^{{\bf S}^1}$ of
complex bordisms  with circle actions. In fact, using Atiyah-Bott-Berline-Vergne localization formula, we can easily show that
all equivariant Chern numbers of any unitary ${\bf S}^1$-manifold $M^{2n}$ with weights and signs satisfying $(Z)$ vanish, and it then follows from a theorem of Guillemin-Ginzburg-Karshon \cite{GGK} that $M^{2n}$ bounds equivariantly.

\vskip .2cm
 In \cite{Kos} Kosniowski also gave the examples of $M_1^2$ satisfying $(L_1)$ and $M_2^6$ satisfying $(S_3)$.
 Actually, $M_1^2$ satisfying $(L_1)$ can be  just  ${\Bbb C}${P}$^1$ with a linear action of ${\bf S}^1$:
 $$
 [z_0:z_1] \longmapsto  [e^{ia\varphi}z_0:z_1]
 $$
and $M_2^6$ satisfying $(S_3)$ can be the 6-sphere $S^6$ with a circle action. Now by Bott residue formula (\ref{bott}),
a direct calculation shows that $\langle c_1, [M_1^2]\rangle=2$ and $\langle c_3, [M_2^6]\rangle=2$, so both $M_1^2$ and $M_2^6$ are not  boundaries.

\vskip .2cm
It should also be pointed out that any unitary ${\bf S}^1$-manifold $M^{2n}$ fixing exactly an isolated point is impossible.
In fact, by Bott residue formula (\ref{bott}), we obtain a contradictive equation
$$0=\langle c_0, [M^{2n}]\rangle=\varepsilon_1{{1}\over{\prod_{j=1}^nw_{1j}}}\not=0.$$

\vskip .2cm
Together with the above arguments, we  see that for a unitary ${\bf S}^1$-manifold $M^{2n}$ fixing just two isolated points,
if $M^{2n}$ is not a boundary, then $n=1$ or $3$, satisfying that $[n/2]+1\leq 2$. This also implies that  for a unitary ${\bf S}^1$-manifold $M^{2n}$ fixing $m$ isolated points with $n\not=1,3$, if $M^{2n}$ is not a boundary, then $m$ must be at least 3, and in particular, when $n\not=1,3$ is odd, $m$ is an even number greater than or equal to 4 by Proposition~\ref{prop3}.
On the other hand, the  solution of inequality $[n/2]+1\leq 3$ is $n\leq 5$, and $[7/2]+1=4$.  This completes the proof of Theorem~\ref{K-conj}.\qed



\begin{thebibliography}{99}

\bibitem{AP} C. Allday and V. Puppe, {\em Cohomological Methods in Transformation Groups}. Cambridge Studies in Advanced Mathematics 32. Cambridge: Cambridge University Press, 1993.

\bibitem{AH}
M. F. Atiyah and  F. Hirzebruch, {\em Spin manifolds and group actions}, in Essays in Topology
and Related Subjects, Springer-Verlag, Berlin, 1970, pp. 18-28.

\bibitem{AS} M. F. Atiyah andI. M. Singer, {\em The index theory of elliptic operators: III}, Ann. of Math., 87 (1968), 546--604.

\bibitem{Bre} G. E. Bredon, {\em Introduction to compact transformation groups}, Pure and Applied Mathematics, Vol. 46. Academic Press, New York-London, 1972.

\bibitem{B} R. Bott, {\em Vector fields and characteristic numbers}, Michigan Math. J. 14 (1967), 231--244.

\bibitem{BP} V. Buchstaber, T. Panov,  {\em Toric Topology.} { Mathematical Surveys
and Monographs,} vol. 204, AMS, Providence, RI, 2015.

\bibitem{BPR} V. Buchstaber, T. Panov, N. Ray, {\em Toric genera.}  Internat. Math.
Research Notices, 16 (2010), 3207--3262.

\bibitem{BR}
V. M. Buchstaber and N. Ray, {\em The universal equivariant genus and Krichever's formula},
{\it Russian Math. Surveys}, 62:1 (2007), 178 - 180.
\bibitem{GGK} V. Guillemin, V. Ginzburg and Y. Karshon, {\em Moment Maps, Cobordisms, and Hamiltonian Group Actions}, Mathematical Surveys and Monographs, {\bf 98},  American Mathematical Society, Providence, RI, 2002.

\bibitem{Hir1}
F. Hirzebruch, {\em Topological Methods in Algebraic Geometry}, 3rd edition, Grundlehren der
mathematischen Wissenschaften, no. 131, Springer, Berlin-Heidelberg 1966.

\bibitem{J} Donghoon, Jang, {\em Circle actions on almost complex manifolds with isolated fixed points}. J. Geom. Phys.  119  (2017), 187--192.

\bibitem{Kos}
C. Kosniowski, {\em Some formulae and conjectures associated
with circle actions}, in ``Topology Symposium, Siegen 1979'' (Proc.
Sympos., Univ. Siegen, Siegen, 1979), 331--339, Lecture
Notes in Math., 788, Springer, Berlin, 1980.


\bibitem{krich1}
I. M. Krichever, {\em Formal groups and the Atiyah - Hirzebruch formula},  {\it Izv. Mathematics
of the USSR-Izvestiya} {\bf 8} (1974),  1271--1285.

\bibitem{ll} Ping Li and Kefeng Liu, {\em Some remarks on circle action on
manifolds}, Mathematical Research Letters {\bf 18} (2011), 437--446.

\bibitem{Lu13}
Z. L\"u,  {\em Equivariant cobordism of unitary toric manifolds}, in Conference
V.  Buchstaber--70  ``Algebraic topology and Abelian functions'', 18--22 June 2013,
Moscow, Abstracts, 53--54.

\bibitem{LuTan}
Z. L\"u and Q. B. Tan, {\em Equivariant Chern numbers and the number of fixed points for torus manifolds}, { Math. Res. Lett.}, {\bf 18:6} (2011), 1319--1325.

\bibitem{Lu} Z. L\"u, {\em Equivariant bordism of 2-torus manifolds and unitary toric manifolds: a survey}.  Proceedings of the Sixth International Congress of Chinese Mathematicians. Vol. II,  267--284, Adv. Lect. Math. (ALM), {\bf 37}, Int. Press, Somerville, MA, 2017.

\bibitem{M} J. Milnor, {\em On the cobordism ring $\Omega_*$ and a complex analogue}. I. Amer. J. Math. {\bf 82} (1960), 505--521.


\bibitem{mus16}
O. R. Musin, {\em Circle actions with two fixed points}, {  Math. Notes}, {\bf 100:4} (2016), 636--638.

\bibitem{mus11}
O. R. Musin, {\em On rigid Hirzebruch genera}, { Mosc. Math. J.,} {\bf 11:1} (2011), 139--147


\bibitem{mus80}
O.R. Musin, {\em Circle action on homotopy complex projective spaces}, {Math. Notes,}  {\bf 28:1} (1980), 533--540.

\bibitem{N} S. P. Novikov,  {\em Some problems in the topology of manifolds connected with the theory of Thom spaces}, Soviet Math. Dokl. {\bf 1} (1960), 717--720.


 \bibitem{pt} A. Pelayo and S. Tolman, {\em Fixed points of symplectic
periodic flows}, Ergodic Theory and
Dynamical Systems {\bf 31} (2011), 1237--1247.





\end{thebibliography}
\end{document}